\documentclass[12 pt, reqno]{amsart}
\usepackage{mathrsfs}
\usepackage{graphicx}
\usepackage{tikz}
\usepackage{amsmath}
\usepackage{amsfonts}
\usepackage{amssymb}
\usepackage{hyperref}

\newtheorem{theorem}{Theorem}
\newtheorem{lemma}[theorem]{Lemma}

\newtheorem{proposition}[theorem]{Proposition}

\usepackage{amsthm,amsmath,amssymb,url,cite,color}

\numberwithin{equation}{section}

\def\lec{\operatorname{lec}}
\def\inv{\operatorname{inv}}
\def\cont{\operatorname{cont}}
\def\exc{\operatorname{exc}}
\def\des{\operatorname{des}}
\def\maj{\operatorname{maj}}
\def\Cont{\operatorname{Cont}}

\def\S{\mathfrak{S}}

\marginparwidth 0pt \marginparsep 0pt
\oddsidemargin +0.1in \evensidemargin 0pt
\topmargin -.3in
\textwidth 6.3in
\textheight 8.0in

\begin{document}

\title[A symmetrical  $q$-Eulerian identity]{A  symmetrical $q$-Eulerian identity}

\author{Guo-Niu Han}
\address[Guo-Niu Han]{I.R.M.A. UMR 7501, Universit\'e de Strasbourg et CNRS, France}
\email{guoniu.han@unistra.fr}

\author{Zhicong Lin}
\address[Zhicong Lin]{Department of Mathematics and Statistics, Lanzhou University, China,
and
 Institut Camille Jordan, UMR 5208 du CNRS, Universit\'{e} de Lyon, Universit\'{e} Lyon 1, France}
\email{lin@math.univ-lyon1.fr}

\author{Jiang Zeng}
\address[Jiang Zeng]{Institut Camille Jordan, UMR 5208 du CNRS, Universit\'{e} de Lyon, Universit\'{e} Lyon 1, France}
\email{zeng@math.univ-lyon1.fr}

\date{}

\begin{abstract}
We find a $q$-analog of the following symmetrical identity involving binomial coefficients $\binom{n}{m}$ and Eulerian numbers $A_{n,m}$,
due to Chung, Graham and Knuth [{\it J. Comb.}, {\bf 1} (2010),
29--38]:
\begin{equation*} 
\sum_{k\geq 0}\binom{a+b}{k}A_{k,a-1}=\sum_{k\geq 0}\binom{a+b}{k}A_{k,b-1}.
\end{equation*}
We 
give  two proofs, using generating function and   bijections, respectively.
\end{abstract}

\maketitle

%First page headline in LaTeX for S\'eminaire Lotharingien de Combinatoire
%--first part
\thispagestyle{myheadings}
\font\rms=cmr8
\font\its=cmti8
\font\bfs=cmbx8

\markright{\its S\'eminaire Lotharingien de
Combinatoire \bfs 67 \rms (2012), Article~B67c\hfill}
\def\thepage{}

\section{Introduction}
The \emph{Eulerian polynomials} $A_n(t)$ are defined by the exponential generating function 
\begin{align}\label{eq:def1}
\sum_{n\geq 0}A_n(t)\frac{z^n}{n!}=\frac{(1-t)e^{z}}{e^{zt}-te^{z}}.
\end{align}
The classical 
\emph{Eulerian numbers} $A_{n,k}$ are the coefficients  of the polynomial  $A_n(t)$, i.e.,  $A_n(t)=\sum_{k=0}^{n}A_{n,k} t^k$.
Recently, Chung, Graham and Knuth~\cite{cgk} noticed that if we modify the value of $A_0(t)$, which is 1 by \eqref{eq:def1},  by taking the convention that
$A_0(t)=A_{0,0}=0$, then 
the following symmetrical identity  holds:
\begin{equation} \label{eq: 0}
\sum_{k\geq 0}\binom{a+b}{k}A_{k,a-1}=\sum_{k\geq 0}\binom{a+b}{k}A_{k,b-1}\qquad (a,b>0).
\end{equation}
Equivalently,  instead of \eqref{eq:def1},  we  define the Eulerian polynomials by the generating function
\begin{align}\label{eq:def1bis}
\sum_{n\geq 0}A_n(t)\frac{z^n}{n!}=\frac{(1-t)e^{z}}{e^{zt}-te^{z}}-1=\frac{e^{z}-e^{tz}}{e^{zt}-te^{z}}.
\end{align}
At the end of \cite{cgk}, the authors   
asked for, among other unsolved problems, a $q$-analog of \eqref{eq: 0}.
The aim  of this paper is to give   such an extension  and provide  two proofs, of which one is analytical and  another  one is combinatorial.

We first introduce some $q$-notations. The $q$-shifted factorial $(z;q)_n$ is defined by\break
$(z;q)_n :=\prod_{i=0}^{n-1}(1-zq^i)$ for any positive integer $n$ and $(z;q)_0=1$. The
$q$-exponential function $e(z;q)$ is defined by
$$
e(z;q) :=\sum_{n\geq 0}\frac{z^n}{(q;q)_n}. 
$$
Several $q$-analogs of \eqref{eq:def1} have been proposed in the literature (see \cite{sw}).
Inspired by the recent work of 
Shareshian and Wachs \cite{sw},  we consider  the following $q$-analog of \eqref{eq:def1bis}:
\begin{equation}\label{eq: main}
\sum_{n\geq0}A_{n}(t,q)\frac{z^n}{(q;q)_n}=\frac{e(z; q)-e(tz; q)}{e(tz; q)-t\,e(z;q)}.
\end{equation}
The $q$-\emph{Eulerian polynomials} $A_{n}(t,q)$ have 
many remarkable properties analogous to Eulerian polynomials,
 see   Shareshian and Wachs \cite{sw}  and Foata and Han \cite{fh2}.
The $q$-\emph{Eulerian numbers} $ A_{n,k}(q)$ are then defined by
$$
A_{n}(t,q)=\sum_{k=0}^n  A_{n,k}(q) t^k \qquad (n\geq 0).
$$
The first  few terms  of $A_{n,k}(q)$ are 
$$
A_{0,0}(q)=0,\;
A_{1,0}(q)=1, \;A_{2,0}(q)=1, \; A_{2,1}(q)=1,\;A_{3,0}(q)=1,\;
A_{3,1}(q)=2+q+q^2,$$
and $A_{3,2}(q)=1$. Also, 
replacement of $t$ by $t^{-1}$ and of $z$ by $tz$ in \eqref{eq: main} yields that $t^nA_n(t^{-1}, q)=tA_n(t,q)$. Thus we have the 
  symmetrical  property
\begin{equation} \label{eq:sy}
A_{n,k}(q)=A_{n,n-k-1}(q).
\end{equation}
Recall that the $q$-binomial coefficients 
$\left[{\smallmatrix n\\ k\endsmallmatrix}\right]_q$ are defined by 
$$
{\bmatrix n\\ k\endbmatrix}_q:=\frac{(q;q)_n }{(q;q)_{n-k}(q;q)_k}\qquad \textrm{for}\quad 0\leq k\leq n,
$$
and $\left[{\smallmatrix n\\  k\endsmallmatrix}\right]_q=0$ if $k<0$ or $k>n$. 

The following symmetrical identity  involving both  the $q$-binomial coefficients $\left[{\smallmatrix n\\  k\endsmallmatrix}\right]_q$ and 
$q$-Eulerian numbers $A_{n,k}(q)$  is a true  $q$-analog of~\eqref{eq: 0}.
\begin{theorem} \label{th: 1}
For any positive integers $a$ and $b$,
we have the $q$-symmetrical identity
\begin{equation} \label{eq: th1}
\sum_{k\geq 0}{\bmatrix a+b\\  k\endbmatrix}_qA_{k,a-1}(q)=\sum_{k\geq 0}{\bmatrix a+b\\  k\endbmatrix}_qA_{k,b-1}(q).
\end{equation}
\end{theorem}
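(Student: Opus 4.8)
The plan is to prove \eqref{eq: th1} by the generating-function method, reducing it to a single functional equation for a bivariate series. Write
\[
C_{n,m}(q):=\sum_{k\ge0}{\bmatrix n\\ k\endbmatrix}_qA_{k,m-1}(q),
\]
so that \eqref{eq: th1} is the assertion $C_{a+b,a}(q)=C_{a+b,b}(q)$, i.e. $C_{n,m}(q)=C_{n,n-m}(q)$ for $n=a+b$ and $1\le m\le n-1$. First I would assemble these numbers into the double generating function $H(t,z):=\sum_{n\ge0}\sum_{m}C_{n,m}(q)\,t^{m-1}\frac{z^n}{(q;q)_n}$ and identify it in closed form. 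Using $\frac{1}{(q;q)_n}{\bmatrix n\\ k\endbmatrix}_q=\frac{1}{(q;q)_k(q;q)_{n-k}}$, the sum over $n$ factors as a Cauchy product, giving $\sum_n C_{n,m}(q)\frac{z^n}{(q;q)_n}=e(z;q)\sum_k A_{k,m-1}(q)\frac{z^k}{(q;q)_k}$; multiplying by $t^{m-1}$ and summing over $m$ collapses the inner sum to $A_k(t,q)$, whence
\[
H(t,z)=e(z;q)\sum_{k\ge0}A_k(t,q)\frac{z^k}{(q;q)_k}=e(z;q)\cdot\frac{e(z;q)-e(tz;q)}{e(tz;q)-t\,e(z;q)}
\]
by \eqref{eq: main}.

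The engine of the proof is the symmetry \eqref{eq:sy}: the substitution $(t,z)\mapsto(t^{-1},tz)$ interchanges $u:=e(z;q)$ and $v:=e(tz;q)$, which is exactly what produces \eqref{eq:sy}. The key step is then to show that $H$ satisfies the near-symmetry
\[
H(t,z)-t^{-2}H(t^{-1},tz)=t^{-1}\bigl(e(tz;q)-e(z;q)\bigr).
\]
I expect this to be a direct algebraic verification: in the notation above $H(t,z)=\dfrac{u(v-u)}{tu-v}$, and since $(t,z)\mapsto(t^{-1},tz)$ swaps $u\leftrightarrow v$ one gets $t^{-2}H(t^{-1},tz)=\dfrac{v(v-u)}{t(tu-v)}$; the difference factors as $\dfrac{v-u}{tu-v}\bigl(u-\tfrac{v}{t}\bigr)$ and telescopes to $(v-u)/t$, which is the right-hand side.

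Finally I would extract coefficients. The right-hand side equals $t^{-1}\sum_{n}(t^{n}-1)\frac{z^n}{(q;q)_n}=\sum_{n}(t^{n-1}-t^{-1})\frac{z^n}{(q;q)_n}$, while the left-hand side has $z^n/(q;q)_n$-coefficient $\sum_m\bigl(C_{n,m}(q)-C_{n,n-m}(q)\bigr)t^{m-1}$ (the reflection $t^{-2}H(t^{-1},tz)$ turns $t^{m-1}$ into $t^{n-1-m}$, i.e. sends $m\mapsto n-m$). Comparing coefficients of $z^n/(q;q)_n$ therefore gives
\[
\sum_m\bigl(C_{n,m}(q)-C_{n,n-m}(q)\bigr)t^{m-1}=t^{n-1}-t^{-1}.
\]
The right-hand side is supported only on the extreme exponents $m=n$ and $m=0$, so the coefficient of $t^{m-1}$ vanishes for every $1\le m\le n-1$, which is precisely $C_{n,m}(q)=C_{n,n-m}(q)$ and hence \eqref{eq: th1}. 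The one conceptual obstacle is that $H$ is \emph{not} palindromic on the nose: the full symmetry is broken at the boundary (the comparison forces $C_{n,n}(q)=A_{n,n-1}(q)=1$), so the crux is to anticipate the correct correction term $t^{-1}(e(tz;q)-e(z;q))$ on the right of the functional equation. Once that term is guessed, both its verification and the final coefficient comparison are routine.
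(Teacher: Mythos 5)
Your proof is correct, and while it is a generating-function argument like the paper's first proof, the route through the algebra is genuinely different. The paper clears the denominator: it multiplies \eqref{eq: main} by $e(tz;q)-t\,e(z;q)$, expands $e(tz;q)\sum_nA_n(t,q)z^n/(q;q)_n$ and $t\,e(z;q)\sum_nA_n(t,q)z^n/(q;q)_n$ as $q$-Cauchy products, and compares coefficients of $t^iz^n/(q;q)_n$ to get an identity between $\sum_k\left[{\smallmatrix n\\ k\endsmallmatrix}\right]_qA_{n-k,i-k}(q)$ and $\sum_k\left[{\smallmatrix n\\ k\endsmallmatrix}\right]_qA_{n-k,i-1}(q)$; this still requires the palindromicity \eqref{eq:sy} as a separate ingredient before it becomes \eqref{eq: th1}. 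You instead multiply the \emph{numerator} by $e(z;q)$, so that $H(t,z)=u(u-v)/(v-tu)$ (with $u=e(z;q)$, $v=e(tz;q)$) is already the generating function of the target sums $C_{n,m}(q)$, and you apply the substitution $(t,z)\mapsto(t^{-1},tz)$ --- the very substitution the paper uses to establish \eqref{eq:sy} --- directly to $H$, so the symmetry is absorbed into the functional equation rather than quoted. Your correction term $t^{-1}\bigl(e(tz;q)-e(z;q)\bigr)$ plays the role of the paper's inhomogeneous term $\sum_k(1-t^k)z^k/(q;q)_k$, and your coefficient extraction, including the reindexing $t^{m-1}\mapsto t^{n-m-1}$ under $t^{-2}H(t^{-1},tz)$ and the boundary cases $m=0,n$ (which force $C_{n,n}(q)=1$), all checks out. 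What your version buys is a more self-contained and arguably cleaner derivation, at the cost of having to guess the correction term; the paper's version yields the coefficient identity in a form that generalizes readily (it is reused verbatim for the $r$-colored extension \eqref{eq: ma}), and the paper also supplies a second, fully bijective proof via Gessel's hook factorization, which your proposal does not attempt.
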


We shall first give a generating function proof of \eqref{eq: th1} in Section~2 and then a combinatorial proof in Section~3.  
We conclude the paper with some further extensions and remarks.

%First page headline in AmS-LaTeX for S\'eminaire Lotharingien de Combinatoire
%--restoring the headers and pagenumbering
\pagenumbering{arabic}
\addtocounter{page}{1}
\markboth{\SMALL GUO-NIU HAN, ZHICONG LIN, AND JIANG ZENG}{\SMALL A 
SYMMETRICAL $q$-EULERIAN IDENTITY}

\section{A generating function proof of \eqref{eq: th1}}
 It follows  from \eqref{eq: main} that 
 \begin{align}\label{eq:lem1}
\bigl(e(tz; q)-t\,e(z; q)\bigr)\sum_{n\geq0}A_{n}(t,q)\frac{z^n}{(q;q)_n}
%=e(z; q)-e(tz; q)
=\sum_{k}\frac{(1-t^k)z^k}{(q;q)_k}.
\end{align}
Now,
\begin{align*}
e(tz; q)\sum_{n\geq0}A_{n}(t,q)\frac{z^n}{(q;q)_n}&=\sum_{k}\frac{(tz)^k}{(q;q)_k}\sum_{n,i}A_{n,i}(q)t^i\frac{z^n}{(q;q)_n}\\
&=\sum_{k,n,i}{\bmatrix n+k\\ k\endbmatrix}_qA_{n,i}(q)t^{i+k}\frac{z^{n+k}}{(q;q)_{n+k}}\\
&=\sum_{k,n,i}{\bmatrix n\\  k\endbmatrix}_qA_{n-k,i-k}(q)t^{i}\frac{z^{n}}{(q;q)_n},
\end{align*}
and
\begin{align*}
t\,e(z;q)\sum_{n\geq0}A_{n}(t,q)\frac{z^n}{(q;q)_n}&=t\sum_{k}\frac{z^k}{(q;q)_k}\sum_{n,i}A_{n,i}(q)t^i\frac{z^n}{(q;q)_n}\\
&=\sum_{k,n,i}{\bmatrix n+k\\ k\endbmatrix}_qA_{n,i}(q)t^{i+1}\frac{z^{n+k}}{(q;q)_{n+k}}\\
&=\sum_{k,n,i}{\bmatrix n\\  k\endbmatrix}_qA_{n-k,i-1}(q)t^{i}\frac{z^{n}}{(q;q)_n}.
\end{align*}
Substituting the last two expressions in \eqref{eq:lem1} and identifying the coefficients of $t^iz^n/(q;q)_n$ on both sides,
we obtain 
$$
\sum_k{\bmatrix n\\  k\endbmatrix}_qA_{n-k,i-k}(q)-\sum_k {\bmatrix n\\ k\endbmatrix}_qA_{n-k,i-1}(q)=\begin{cases}
1,&\textrm{if $i=0\not =n$},\\
-1,&\textrm{if $i=n\not =0$},\\
0,&\textrm{otherwise}.
\end{cases}
$$
Setting $i=a$, $n=a+b$, and using the symmetrical property~\eqref{eq:sy}, we obtain~\eqref{eq: th1}. 
\qed
\medskip

We can also derive $q$-analogs of other identities in \cite{cgk}. For example, let
$H_n(t;q)=\sum_{i=0}^n\bmatrix n\\  i \endbmatrix_qt^i$ be the Rogers-Szeg\H o polynomials, see \cite[p.~49]{an}. Then
\begin{align*}
e(tz;q)e(z;q)
&=\sum_{n\geq 0}\frac{z^n}{(q;q)_n}H_n(t;q).
\end{align*}
If we multiply  \eqref{eq: main} by $e(tz;q)^2-t^2e(z;q)^2 =(e(tz;q)+te(z;q))(e(tz;q)-te(z;q))$,  then the right-hand side is equal to
\begin{align*}
(e(tz;q)+te(z;q))&(e(z;q)-e(tz;q))\\
&=\sum_{n\geq 0}\frac{z^n}{(q;q)_n} \left((1-t)H_n(t;q)+(t-t^n)H_n(1;q)\right).
\end{align*}
On the other hand, we have
\begin{align*}
(e(tz;q))^2\sum_{n\geq0}A_{n}(t,q)\frac{z^n}{(q;q)_n}
=\sum_{n, i,k}H_k(1;q) {\bmatrix n \\  k\endbmatrix}_q A_{n-k,i-k}(q)t^i\frac{z^n}{(q;q)_n},
\end{align*}
and 
\begin{align*}
t^2(e(z;q))^2\sum_{n\geq0}A_{n}(t,q)\frac{z^n}{(q;q)_n}
=\sum_{n, i,k}H_k(1;q){\bmatrix n \\  k\endbmatrix}_qA_{n-k,i-2}(q)t^i\frac{z^n}{(q;q)_n}.
\end{align*}
Hence,  identifying the coefficients of $t^iz^n/(q;q)_n$ in these expressions,
we arrive at
\begin{multline*}
\sum_kH_k(1;q){\bmatrix n \\  k\endbmatrix}_qA_{n-k,i-k}(q)
-\sum_kH_k(1;q){\bmatrix n \\  k\endbmatrix}_qA_{n-k,i-2}(q)\\
={\bmatrix n \\  i\endbmatrix}_q-{\bmatrix n \\  i-1\endbmatrix}_q+
\begin{cases}
H_n(1;q),&\textrm{if $i=1\not =n$},\\
-H_n(1;q),&\textrm{if $i=n\not =1$},\\
0,&\textrm{otherwise}.
\end{cases}
\end{multline*}

Of course, we can also  multiply  \eqref{eq: main} by $e(tz;q)^r-t^re(z;q)^r$ for any integer $r\geq 1$ and 
 derive a $q$-analog of the more general  (and complicated)  identity in \cite{cgk}. 

%%%%%%%%%%%%%%%%%%%%%%%%%
\section{A combinatorial proof  of \eqref{eq: th1}}%
%%%%%%%%%%%%%%%%%%%%%%%%%
%\subsection{Some  preliminaries}
For each permutation $\pi =\pi_1\pi_2\dots \pi_n$ of $[n]:=\{1,\ldots, n\}$, define the following four statistics:
\begin{align*}
&\exc(\pi) := |\{i: 1\leq i \leq n, \pi_i>i\}|;\\
&\des(\pi) := |\{i: 1\leq i \leq n-1, \pi_i>\pi_{i+1}\}|;\\
&\maj(\pi) := \sum_{\pi_i>\pi_{i+1}} i; \\
&\inv(\pi) := |\{(i, j): i<j, \pi_i>\pi_j\}|;
\end{align*} 
called number of \emph{excedances}, number of \emph{descents}, \emph{major index} and \emph{inversion number}, respectively. It is well-known that the Eulerian number  $A_{n,k}$ counts the number of permutation of $[n]$ with $k$ descents or $k$ excedances. 

Let $\S_n$ be the set of permutations of $[n]$. Shareshian and Wachs~\cite{sw} prove  that
$$
A_{n}(t,q)=\sum_{\pi\in \S_n}q^{(\maj-\exc)\pi}t^{\exc\pi}.
$$
For our purpose we shall use another interpretation of $A_{n}(t,q)$ due to 
Foata and Han~\cite{fh,fh2}.  This interpretation  is  based on Gessel's hook factorization of permutations \cite{ge}, which we recall now.
A word $w=x_1x_2\ldots x_m$ is called a \emph{hook} if $x_1>x_2$ and either $m=2$, or $m\geq 3$ and $x_2<x_3<\ldots<x_m$. Clearly,  each permutation $\pi =\pi_1\pi_2\dots \pi_n$ admits a unique factorization, called its \emph{hook factorization}, $p\tau_{1}\tau_{2}\dots \tau_{r}$, where $p$ is an increasing word and each factor $\tau_1$, $\tau_2$, \ldots, $\tau_k$ is a hook. 
To derive the hook factorization of a permutation, one can start from the right and factor out  each hook step by step. 
For each $i$ let $\inv\tau_i$ denote the number of inversions of $\tau_i$ and define
\begin{equation}
\lec(\pi) := \sum_{1\leq i\leq k}\inv (\tau_i).
\end{equation}
 For example, the hook factorization of $\pi=1\,3\,4\,14\,12\,2\,5\,11\,15\,8\,6\,7\,13\,9\,10$ is
 $$1\,3\,4\,14\,|12\,2\,5\,11\,15\,|8\,6\,7\,|13\,9\,10.$$
  Hence  $p=1\,3\,4\,14$, $\tau_1=12\,2\,5\,11\,15$, $\tau_2=8\,6\,7$, 
  $\tau_3=13\,9\,10$ and
   $$\lec(\pi)=\inv(12\,2\,5\,11\,15)+\inv(8\,6\,7)+\inv(13\,9\,10)=7.$$
 
 Let $p\tau_{1}\tau_{2}\dots \tau_{r}$ be the hook factorization of a permutation $\pi$. Let ${\mathcal A}_{0}$ (respectively ${\mathcal A}_{i}$, 
$1\leq i \leq r$) denote the set of all letters in the
word $p$ (respectively in the hook $\tau_{i}$). We call ${\mathcal A}_{0}=\cont(p)$ (respectively ${\mathcal A}_{i}=\cont(\tau_{i})$ ($1\leq i \leq r$)) the \emph{content} of $p$ (respectively of the hook $\tau_{i}$) and \emph{content} of $\pi$ the sequence $\Cont(\pi)=({\mathcal A}_{0}, {\mathcal A}_{1}, \dots , {\mathcal A}_{r})$. The statistic $(\inv- \lec)\pi$ is equal to the number of pairs $(k, l)$ such that $k\in {\mathcal A}_{i}$, $l\in {\mathcal A}_j$, $k> l$ and $i< j$, a number we shall denote by $\inv({\mathcal A}_{0}, {\mathcal A}_{1}, \dots , {\mathcal A}_{r})$. 

From Foata and Han\cite{fh,fh2}, we derive the following combinatorial interpretation:
\begin{align*}
A_{n}(t,q)
%=\sum_{\pi\in \S_n}q^{(\maj-\exc)\pi}t^{\exc\pi}
=\sum_{\pi\in \S_n}q^{(\inv-\lec)\pi}t^{\lec\pi}.
\end{align*}
Therefore
\begin{equation}\label{eq:key}
A_{n,k}(q)=\underset{\lec\pi=k}{\sum_{\pi\in\S_n} }q^{(\inv-\lec)\pi}.
\end{equation}
Recall \cite{an} that  the $q$-multinomial coefficient
\begin{equation*}
{\bmatrix n\\  a_0,a_1,\ldots,a_k\endbmatrix}_{q}=\frac{(q;q)_n }{(q;q)_{a_0}(q;q)_{a_1}\cdots(q;q)_{a_k}}
\end{equation*}
has the interpretation
\begin{align}\label{eq:qmul}
{\bmatrix n\\  a_0,a_1,\ldots,a_k\endbmatrix}_{q}=\sum_{({\mathcal A}_0, {\mathcal A}_1,\ldots, {\mathcal A}_k)}q^{\inv({\mathcal A}_0, {\mathcal A}_1,\ldots, {\mathcal A}_k)},
\end{align}
where the sum is over all ordered partitions $({\mathcal A}_0, {\mathcal A}_1,\ldots, {\mathcal A}_k)$ of $[n]$ such that $|{\mathcal A}_i|=a_i$, $0\leq i\leq k$.

We will  give a combinatorial proof of  \eqref{eq: th1} using \eqref{eq:key} and \eqref{eq:qmul}.
As a warm-up, we first prove the symmetry property \eqref{eq:sy} by constructing an explicit involution on permutations.

\begin{lemma}\label{th:2}
There is an involution  $\pi \mapsto \sigma$ on $\S_n$ satisfying
\begin{align*}
\lec (\pi) = n - 1 - \lec (\sigma),\quad \textrm{and}\quad
(\inv-\lec) {\pi} = (\inv-\lec) {\sigma}.
\end{align*}
\end{lemma}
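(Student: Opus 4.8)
The plan is to prove the lemma at the level of Gessel's hook factorization, by an involution that fixes the ordered sequence of nonempty content blocks and only rearranges the letters inside them. The excerpt records that $(\inv-\lec)\pi=\inv(\mathcal A_0,\ldots,\mathcal A_r)$ is a function of the content alone, so any such block-preserving involution automatically meets the requirement $(\inv-\lec)\pi=(\inv-\lec)\sigma$. All the work therefore goes into arranging that $\lec$ is reversed to $n-1-\lec$.

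The first step is to make precise the encoding implicit in \eqref{eq:key}: a permutation is the same datum as its content together with a choice of arrangement of each block. The increasing word $p$ has only its increasing arrangement, while a block $\mathcal A$ of size $c$ written as a hook may be any of the $c-1$ hooks on $\mathcal A$, the one with $\inv(\tau)=j$ having the $(j+1)$st smallest letter of $\mathcal A$ in front. For this to be a genuine bijection I must check that running the hook-factorization algorithm on the concatenation of an arbitrary increasing word with arbitrary hooks returns exactly those factors. I would verify this from the right-to-left greedy description: a hook is closed as soon as its peak $x_1>x_2$ is met, so two consecutive blocks can never be fused and the left-over increasing prefix is read off as $p$; this holds regardless of the relative sizes of adjacent blocks, which is the one point that needs genuine care.

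The involution then singles out the first block. List the nonempty content blocks as $(B,\mathcal C_1,\ldots,\mathcal C_s)$, where $B$ is $p$ when $p\neq\varnothing$ and is the first hook $\tau_1$ otherwise, and $\mathcal C_1,\ldots,\mathcal C_s$ are the remaining hooks. Encode the arrangement of $B$ (of size $b$) by $\iota(B):=\inv(B)\in\{0,1,\ldots,b-1\}$, where $0$ is the increasing arrangement and $1,\ldots,b-1$ are the hooks. Keeping the whole sequence $(B,\mathcal C_1,\ldots,\mathcal C_s)$ of blocks fixed, define $\sigma$ by the two substitutions
\[
\iota(B)\ \longmapsto\ (b-1)-\iota(B),\qquad \inv(\mathcal C_l)\ \longmapsto\ |\mathcal C_l|-\inv(\mathcal C_l)\quad(1\le l\le s).
\]
The essential asymmetry is that the first block is complemented inside the full range $\{0,\ldots,b-1\}$, so it may switch between an increasing word ($\iota=0$) and a hook, whereas each later block is complemented inside $\{1,\ldots,|\mathcal C_l|-1\}$ and remains a hook. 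Each substitution is an involution on its index set and the blocks themselves are untouched, so $\pi\mapsto\sigma$ is an involution; by the encoding of the previous step it corresponds to a bona fide permutation, and since the sequence of nonempty blocks is unchanged, $\inv(\mathcal A_0,\ldots,\mathcal A_r)$, hence $(\inv-\lec)$, is preserved.

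It remains only to reverse $\lec$. Using $\lec(\pi)=\iota(B)+\sum_{l}\inv(\mathcal C_l)$ together with $b+\sum_l|\mathcal C_l|=n$,
\[
\lec(\sigma)=\big((b-1)-\iota(B)\big)+\sum_{l}\big(|\mathcal C_l|-\inv(\mathcal C_l)\big)=\Big(b-1+\sum_l|\mathcal C_l|\Big)-\lec(\pi)=n-1-\lec(\pi),
\]
as desired. The step I expect to be the real obstacle is exactly the choice of complement on the first block: complementing only the genuine hooks would send $\lec$ to $n-a_0-\lec$, and it is the single extra unit gained by letting the first block toggle between an increasing word and a hook that corrects this to $n-1-\lec$. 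Pinning down this bookkeeping, and simultaneously confirming via the re-parsing step that the modified block data really is the hook factorization of an actual permutation, is where the care lies.
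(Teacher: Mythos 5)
Your construction is exactly the paper's: your complementation of the first nonempty block over $\{0,\ldots,b-1\}$ and of each later hook over $\{1,\ldots,|\mathcal C_l|-1\}$ coincides with the maps $d'$ and $d$ in the paper's proof, applied to $p$ (or to $\tau_1$ when $p=\emptyset$) and to the remaining hooks, respectively. The proposal is correct and follows essentially the same route, being if anything slightly more explicit than the paper about verifying that the modified factor sequence re-parses as a genuine hook factorization.
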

\begin{proof}
Let $\tau$ be a hook  with $\inv(\tau)=k$ and  $\cont(\tau)=\{x_1, \ldots, x_m\}$, where  $x_1<\ldots <x_m$. 
Define
$d(\tau)=x_{m-k+1}x_1\ldots x_{m-k} x_{m-k+2}\ldots x_m$.
Clearly $d(\tau)$    is the unique hook 
 satisfying  $\cont(d(\tau))=\cont(\tau)$ and $\inv(d(\tau))=m-k=|\cont(\tau)|-\inv(\tau)$.

  Let $\tau$ be a word with $\inv(\tau)=k$ and  $\cont(\tau)=\{x_1, \ldots, x_m\}$, where  $x_1<\ldots <x_m$. 
Define
 $d'(\tau)=x_{m-k}x_1\ldots x_{m-k-1} x_{m-k+1}\ldots x_m$.
Clearly  $d'(\tau)$  is the unique word
 satisfying  $\cont(d'(\tau))=\cont(\tau)$ and $\inv(d'(\tau))=m-k-1=|\cont(\tau)|-\inv(\tau)-1$.
  
Let $\pi=p\tau_{1}\tau_{2}\ldots\tau_{r}$ be the hook factorization of $\pi\in \S_n$.
\begin{itemize}
\item If  $p\neq\emptyset$, let $\sigma=d'(p)d(\tau_{1})d(\tau_{2}),\ldots , d(\tau_{r})$. 
\item If $p=\emptyset$, let $\sigma=d'(\tau_1) d(\tau_{2})d(\tau_{3})\ldots d(\tau_{r})$. 
\end{itemize}
Since $d$ and $d'$ are two involutions, 
it is routine to check that such a mapping is an involution with the  required properties.
\end{proof}

%\subsection{Combinatorial proof of the symmetrical q-eulerian identity}
%Again, our combinatorial proof of \eqref{eq: th1} is based on the statistics
%$(\inv, \lec)$. 
For each fixed positive integer~$n$, a {\it two-pix-permutation of $[n]$} is
a  sequence of words
 \begin{equation}
\label{eq:defvec}
{\bf v}=(p_1, \tau_1, \tau_2, \ldots, \tau_{r-1}, \tau_{r}, p_2)
\end{equation}
satisfying  the following conditions:
\begin{itemize}
\item[(C1)] $p_1$ and $p_2$ are two increasing words, possibly empty;
\item[(C2)] $\tau_1, \ldots, \tau_r$ are hooks for some positive integer $r$;
\item[(C3)] The concatenation $p_1 \tau_1 \tau_2 \ldots \tau_{r-1} \tau_{r} p_2$ of all components of $\bf v$ is a permutation of $[n]$.
\end{itemize}

We also extend the two statistics to the two-pix-permutations by
\begin{align*}
\lec ({\bf v}) &= \inv(\tau_1)+ \inv(\tau_2) + \cdots + \inv(\tau_r), \\
\inv ({\bf v}) &=\inv(p_1 \tau_1 \tau_2 \ldots \tau_{r-1} \tau_{r} p_2).
\end{align*}
It follows  that
\begin{equation}\label{eq:il}
(\inv-\lec) {\bf v} = 
\inv(\cont(p_1), \cont(\tau_1),\cont(\tau_2),\ldots, \cont(\tau_r), \cont(p_2) ).
\end{equation}

\begin{lemma} \label{th:3}
The generating function of all two-pix-permutations ${\bf v}$ 
of $n$ such that $\lec({\bf v})=s$ by the statistic $\inv-\lec$ is
\begin{equation} \label{eq:comb}
\sum_{k\geq 0}{\bmatrix n\\ k\endbmatrix}_qA_{k,s}(q).
\end{equation}
\end{lemma}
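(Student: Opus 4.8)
The plan is to realize each two-pix-permutation as an independent pair, consisting of an \emph{interleaving} and a \emph{permutation}, and then to show that the weight $q^{\inv-\lec}$ splits as a product over this pair. Concretely, I would encode $\mathbf{v}=(p_1,\tau_1,\dots,\tau_r,p_2)$ by the set $B:=\cont(p_2)\subseteq[n]$ together with the word $w:=p_1\tau_1\cdots\tau_r$, which is a permutation of $S:=[n]\setminus B$. Since $p_2$ is increasing it is determined by $B$ alone, so this encoding loses no information.

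Next I would argue that, as $\mathbf{v}$ ranges over the two-pix-permutations with $\cont(p_2)=B$, the word $w$ ranges over \emph{all} permutations of $S$. This is precisely the uniqueness of the hook factorization recalled in the Introduction: prescribing an increasing word $p_1$ together with hooks $\tau_1,\dots,\tau_r$ whose concatenation is $w$ is the same as prescribing $w$, since $w$ admits one and only one such factorization (obtained by reading off the hooks from the right). Applying the order-isomorphism $S\to[k]$ with $k=|S|$ turns $w$ into a permutation $\pi\in\S_k$; because $\inv$ and $\lec$ are invariant under order-isomorphism, we get $\lec\pi=\lec(\mathbf{v})=s$ and $(\inv-\lec)\pi=\inv(\cont(p_1),\cont(\tau_1),\dots,\cont(\tau_r))$.

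Then I would verify that the weight factorizes. By \eqref{eq:il} we have
\[
(\inv-\lec)\mathbf{v}=\inv\big(\cont(p_1),\cont(\tau_1),\dots,\cont(\tau_r),B\big).
\]
Because $B$ is the \emph{last} block of this ordered partition, every pair $(x,y)$ with $x>y$ and with $x$ in an earlier block is either internal to $S$ or has $x\in S$ and $y\in B$; hence the right-hand side equals $(\inv-\lec)\pi+\inv(S,B)$, where $\inv(S,B)$ is the inversion statistic of the two-block ordered partition $(S,B)$. Grouping the two-pix-permutations with $\lec=s$ according to $k=|S|$ then gives
\[
\sum_{\lec(\mathbf{v})=s}q^{(\inv-\lec)\mathbf{v}}=\sum_{k}\Bigg(\sum_{\substack{(S,B)\\ |S|=k}}q^{\inv(S,B)}\Bigg)\Bigg(\sum_{\substack{\pi\in\S_k\\ \lec\pi=s}}q^{(\inv-\lec)\pi}\Bigg),
\]
and the two inner sums evaluate to ${\bmatrix n\\ k\endbmatrix}_q$ by \eqref{eq:qmul} and to $A_{k,s}(q)$ by \eqref{eq:key}, which is exactly \eqref{eq:comb}.

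The step I expect to be most delicate is not the product formula itself but the bookkeeping at the boundary. One must check that the weight split in the displayed identity is exact, i.e.\ that no pair with $x\in B$, $y\in S$ is erroneously counted precisely because $B$ sits last, and that the degenerate configurations (empty $p_1$, empty $p_2$, or an empty sequence of hooks) are matched without error. In particular, the case $S=\emptyset$ must contribute nothing on the combinatorial side so as to agree with the convention $A_{0,0}(q)=0$ built into \eqref{eq:key}; this is the point where an off-by-one over the empty permutation could otherwise creep in.
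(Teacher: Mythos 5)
Your proof is correct and follows essentially the same route as the paper: both encode $\mathbf{v}$ as the pair $(\,p_1\tau_1\cdots\tau_r,\ \cont(p_2)\,)$ via uniqueness of the hook factorization and then invoke \eqref{eq:key}, \eqref{eq:qmul} and \eqref{eq:il}. You merely spell out the weight factorization $(\inv-\lec)\mathbf{v}=(\inv-\lec)\pi+\inv(S,B)$ that the paper leaves implicit.
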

\begin{proof}
By the hook factorization, the  two-pix-permutation 
$ {\bf v}$ in \eqref{eq:defvec}
%=(p_1, \tau_1, \tau_2, \ldots, \tau_{r-1}, \tau_{r}, p_2)$
is in bijection with the pair 
$(\sigma, p_2)$, 
where 
$\sigma= p_1 \tau_1 \tau_2 \ldots \tau_{r-1} \tau_{r}$ is a permutation on $[n]\setminus\cont(p_2)$ and $p_2$ is an increasing word.  Thus, by \eqref{eq:key}, \eqref{eq:qmul}, and \eqref{eq:il}, 
the generating function of all two-pix-permutations ${\bf v}$ 
of $[n]$ such that $\lec({\bf v})=s$ and $|p_2|=n-k$ with respect to the weight
$q^{(\inv-\lec)({\bf v})}$ is
$\left[{\smallmatrix n\\ k\endsmallmatrix}\right]_qA_{k,s}(q)$.
\end{proof}

\begin{lemma}\label{th: 4}
There is a bijection ${\bf v} \mapsto {\bf u}$ 
on the set of all two-pix-permutations of $[n]$ satisfying
\begin{align*}
\lec ({\bf v}) = n - 2 - \lec ({\bf u}),\quad \textrm{and}\quad
(\inv-\lec) {\bf v} = (\inv-\lec) {\bf u}.
\end{align*}
\end{lemma}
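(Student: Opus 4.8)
The plan is to lift the two involutions $d$ and $d'$ from the proof of Lemma~\ref{th:2} to the setting of two-pix-permutations, using $d$ on the interior hooks and the ``defect'' map $d'$ at the two increasing ends. Concretely, given ${\bf v}=(p_1,\tau_1,\dots,\tau_r,p_2)$ I would replace each hook $\tau_i$ by its complement $d(\tau_i)$, which satisfies $\inv(d(\tau_i))=|\tau_i|-\inv(\tau_i)$, and I would convert the two increasing words $p_1,p_2$ into hooks by applying $d'$, recording $\inv(d'(p_j))=|p_j|-1$ since $\inv(p_j)=0$. Writing $M=\sum_i|\tau_i|$ and $n=|p_1|+M+|p_2|$, the new statistic becomes
\[
\lec({\bf u})=(|p_1|-1)+\sum_i\bigl(|\tau_i|-\inv(\tau_i)\bigr)+(|p_2|-1)=n-2-\lec({\bf v}),
\]
which is exactly the required relation. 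The intuition for the ``$n-2$'' is that a two-pix-permutation has \emph{two} increasing ends instead of one, so each end contributes a single $-1$ through $d'$, doubling the $n-1$ of Lemma~\ref{th:2}.

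The invariance of $\inv-\lec$ should come essentially for free from \eqref{eq:il}: both $d$ and $d'$ preserve the content of the block on which they act, and my construction never alters the left-to-right order of the blocks, so the ordered content sequence $(\cont(p_1),\cont(\tau_1),\dots,\cont(\tau_r),\cont(p_2))$ is unchanged (up to appending empty words at the ends, which does not affect $\inv$). Hence $\inv(\cont(p_1),\dots,\cont(p_2))$, and therefore $(\inv-\lec)({\bf v})$, is preserved. What does require checking is that the image is again a legitimate two-pix-permutation: one must verify that each $d'(p_j)$ with $|p_j|\ge 2$ is genuinely a hook (it is, being $x_m x_1\cdots x_{m-1}$), and that the designated decomposition of ${\bf u}$ still satisfies (C1)--(C3).

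The main obstacle, and where I expect the real work to lie, is the boundary behaviour. When $p_1=\emptyset$ (respectively $p_2=\emptyset$) there is no increasing word to feed to $d'$, so, exactly as in the $p=\emptyset$ case of Lemma~\ref{th:2}, I would instead apply $d'$ to the first hook $\tau_1$ (respectively the last hook $\tau_r$); since $\inv(d'(\tau_1))=|\tau_1|-\inv(\tau_1)-1$ is one less than $\inv(d(\tau_1))$, this supplies precisely the missing $-1$, and a short case analysis confirms that $d'(\tau_1)$ is either a genuine hook or an increasing word that legitimately becomes the new leading block. A length-one end ($|p_j|=1$) is simply kept as a one-letter increasing word, which is consistent because then $|p_j|-1=0$. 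The genuinely delicate configuration is $p_1=p_2=\emptyset$ with a single hook $\tau_1$ spanning all of $[n]$, where both ends must act on the same hook; a direct computation shows the construction goes through exactly when $\inv(\tau_1)\le n-2$, the extremal case $\inv(\tau_1)=n-1$ sitting at $\lec=n-1$ outside the range of the symmetry $s\mapsto n-2-s$. After pinning down these cases I would verify that the map is its own inverse, using $d\circ d=\mathrm{id}$, $d'\circ d'=\mathrm{id}$, and the fact that whether a transformed leading (or trailing) block is a hook or an increasing word records unambiguously which boundary rule was applied; this records the bijection and, combined with Lemma~\ref{th:3}, yields \eqref{eq: th1}.
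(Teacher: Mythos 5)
Your construction is essentially the paper's proof: the authors pass to the ``compact form'' by stripping the empty words from the two ends, then apply $d'$ to the leftmost and rightmost non-empty blocks and $d$ to the interior hooks --- which is exactly your case analysis on whether $p_1$ or $p_2$ is empty --- and they likewise treat the single-block case separately by sending the unique block with content $[n]$ and $\lec=k$ to the unique one with $\lec=n-2-k$. Your caveat about the extremal element with $\lec(\tau_1)=n-1$ is well taken: the paper's statement silently excludes it, and it is indeed harmless for Theorem~\ref{th: 1}, where only $s=a-1\le n-2$ is needed.
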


\begin{proof}
We give an explicit construction of  the bijection. 
Let ${\bf v}$ be a two-pix-permutation and write
%$${\bf v}=(p_1, \tau_1, \tau_2, \ldots, \tau_{r-1}, \tau_{r}, p_2)$$
%be a two-pix-permutation. 
%We write
$${\bf v}=(\tau_0, \tau_1, \tau_2, \ldots, \tau_{r-1}, \tau_{r}, \tau_{r+1}),$$
where  $\tau_0=p_1$ and $\tau_{r+1}=p_2$.
If $\tau_i$ (respectively $\tau_j$) is  the leftmost (respectively rightmost) non-empty
word (clearly $i=0, 1$ and $j=r, r+1$), we can write ${\bf v}$ in the following compact way by removing the empty words at the beginning or at the end:
\begin{equation}
\label{eq:v_compacted}
{\bf v}=(\tau_i, \tau_{i+1},  \ldots, \tau_{j-1}, \tau_{j}).
\end{equation}
It is easy to see that the above procedure is reversible 
by adding some necessary empty words at the two  ends of the compact form \eqref{eq:v_compacted}. Now we work with the compact form. 
Recall that
\begin{equation}
\label{eq:inv-lec:compact}
(\inv-\lec) {\bf v} = 
\inv(\cont(\tau_i),\cont(\tau_{i+1}),\ldots, \cont(\tau_{j-1}), \cont(\tau_j) )
\end{equation}
and
$\lec ({\bf v}) = 
\sum_{k=i}^j \lec(\tau_k)
$.

If $i=j$, then only one word $\tau_i$ 
is in the sequence $\bf v$.
We define ${\bf u} = (\emptyset, \sigma_i, \emptyset)$, where
$\sigma_i$ is  the unique word (hook) with content $[n]$ such that 
$\lec(\sigma_i) = n-2 - \lec(\tau_i)$. 

If $j>i$,  we define the two-pix-permutation ${\bf u}$ by
$$
{\bf u}=(d'(\tau_i), d(\tau_{i+1}), d(\tau_{i+2}),  \ldots, d(\tau_{j-1}), d'(\tau_{j})),
$$
where $d$ and $d'$ are two involutions
defined in the proof of Lemma~\ref{th:2}.

Since $\lec (d'(\tau_i)) = |\cont(\tau_i)| - 1 - \lec(\tau_i)$,
$\lec (d'(\tau_j)) = |\cont(\tau_j)| - 1 - \lec(\tau_j)$ and
$\lec (d'(\tau_k)) = |\cont(\tau_k)|  - \lec(\tau_k)$  for $k\not=i,j$,
	we have
$$\lec ({\bf u}) = \sum  |\cont(\tau_k)| -2 - \lec ({\bf v})
=n -2 - \lec ({\bf v}).
$$
Finally it follows from \eqref{eq:inv-lec:compact}  that $(\inv-\lec){\bf u}=(\inv-\lec){\bf v}$.

We give an example to illustrate the bijection. Let ${\bf v} =(27, 6389, 514, \emptyset)$. Then 
 ${\bf v} $ is a two-pix-permutation of $[9]$ and
$\inv({\bf v})=19, \lec({\bf v})=3, (\inv-\lec){\bf v}=16$.
The compact form is $(27, 6389, 514)$, so that 
$$
{\bf u}= (d'(27), d(6389), d'(514) = (72, 9368 , 145) .
$$
Since the first word $72$ is  not increasing, we obtain the standard form by adding the empty word at the beginning, so that
${\bf u}= (\emptyset, 72, 9368 , 145)$.
Hence
$\inv({\bf u})=20$, $\lec({\bf u})=4$, and  $(\inv-\lec){\bf u}=16$.
\end{proof}

Combining Lemmas~\ref{th:2}, \ref{th:3} and \ref{th: 4}, we obtain a combinatorial proof of \eqref{eq: th1}.
%%%%%%%%%%%%%%%%%%%%%%
\section{Further extensions and remarks}
%%%%%%%%%%%%%%%%%%%%%

%\begin{prop}\label{th: 3}
The classical Eulerian polynomials correspond to the generating function of 
 descent numbers of symmetric groups. Let $r\geq 1$ be an integer. 
 As a natural extension of \eqref{eq: main}, we consider the polynomial $A_{n}^{(r)}(t,q)$  defined by the generating function
\begin{equation} \label{eq: 1b}
\frac{e(z;q^r)-e(t^rz;q^r)}{e(t^rz;q^r)-te(z;q^r)}=\sum_{n\geq 1}A_{n}^{(r)}(t,q)\frac{z^n}{(q^r;q^r)_n}.
\end{equation}
It is easy to see that $A_{n}^{(r)}(1,1)=r^n n!$, which is the cardinality of 
 $C_r\wr \mathfrak{S}_n$, that is, the 
wreath product  of a cyclic group $C_r$ of order $r$ with the symmetric group $\S_n$.
 Introduce the coefficients $A_{n,i}^{(r)}$ by
\begin{align}\label{eq:defr}
A_{n}^{(r)}(t,q)=\sum_{i}A_{n,i}^{(r)}(q)t^i.
\end{align}
The generating function proof of \eqref{eq: th1} can be applied to derive immediately the identity
\begin{equation} \label{eq: ma}
\sum_k{\bmatrix n\\ k\endbmatrix}_{q^r}A_{k,rn-i-1}^{(r)}(q)
=\sum_k{\bmatrix n\\ k\endbmatrix}_{q^r}A_{k,i-1}^{(r)}(q)
\end{equation}
for positive integers $i$ and $n$ such that $i\not= rn$. 
%\end{prop}

Let $r$ and $n$ be two positive integers. Let ${\mathcal A}=\{a,b,\ldots\}$ be any subset of $[n]$. We define the \emph{$r$-colored version} 
of ${\mathcal A}$ by 
$${\mathcal A}^r :=\{a^1,b^1,\ldots,a^2,b^2,\ldots,\ldots,a^r,b^r,\ldots\}.$$ 
Order the elements of $[n]^r$ by
\begin{equation*}
1^1<1^2\ldots<1^r<2^1<2^2\ldots<2^r<\ldots<n^1<n^2<\ldots<n^r.
\end{equation*}

A \emph{pix-$r$-colored-word} is a sequence of colored words
$$
{\bf w}=(p, \tau_1, \tau_2, \ldots, \tau_{k})
$$
satisfying the following conditions:
\begin{itemize}
\item[(C1)] $p$ is an increasing word, with content ${\mathcal A}_0^r$, the $r$-colored version of ${\mathcal A}_0$, possibly empty;
\item[(C2)] $\tau_i$ $(1\leq i\leq k)$ are hooks, with content ${\mathcal A}_i^r$, 
	the $r$-colored version of ${\mathcal A}_i$, and the positive integer $k$ is not fixed;
\item[(C3)] $({\mathcal A}_0,{\mathcal A}_1,\ldots,{\mathcal A}_k)$ is an ordered partition of $[n]$.
\end{itemize}
Let  $\mathscr{W}_{n,r}$  be  the set of all pix-$r$-colored-words of $[n]$. For each ${\bf w}\in \mathscr{W}_{n,r}$ we define two statistics by
\begin{align*}
\inv_r({\bf w}) &=\inv(p \tau_1 \tau_2 \ldots \tau_{r-1} \tau_{r}),\\
\lec_r(\bf w)&= \inv(\tau_1)+\cdots +\inv(\tau_k).
\end{align*}

\begin{proposition}
Let $A_{n}^{(r)}(t,q)$ be defined by \eqref{eq: 1b}.
Then
$$
\sum_{\sigma\in\mathscr{W}_{n,r}}q^{(\inv_r-\lec_r)(\sigma)}t^{\lec_r(\sigma)}=A_{n}^{(r)}(t,q).
$$

\end{proposition}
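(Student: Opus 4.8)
The plan is to mirror the generating function argument behind the $r=1$ interpretation $A_n(t,q)=\sum_{\pi}q^{(\inv-\lec)\pi}t^{\lec\pi}$, now carried out on the set $\mathscr{W}_{n,r}$. Writing $Q=q^r$, I would introduce the full generating function
\[
F(z):=\sum_{n\geq 0}\frac{z^n}{(Q;Q)_n}\sum_{{\bf w}\in\mathscr{W}_{n,r}}q^{(\inv_r-\lec_r)({\bf w})}t^{\lec_r({\bf w})},
\]
where the $n=0$ term is the empty word with weight $1$, and show that $F(z)=\dfrac{(1-t)\,e(z;Q)}{e(t^rz;Q)-t\,e(z;Q)}$. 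This is exactly $1$ plus the right-hand side of \eqref{eq: 1b}, so extracting the coefficient of $z^n/(Q;Q)_n$ for $n\geq 1$ (the constant $1$ being absorbed by the empty word) yields the Proposition.

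The decomposition of the weight is the heart of the argument. For ${\bf w}=(p,\tau_1,\dots,\tau_k)$ with underlying ordered partition $({\mathcal A}_0,\dots,{\mathcal A}_k)$ of $[n]$ coming from condition (C3), the statistic $\lec_r$ is by definition $\sum_{i}\inv(\tau_i)$, so it sees only the internal structure of the hooks. The key identity I must establish is the colored analogue of \eqref{eq:il}, namely
\[
(\inv_r-\lec_r)({\bf w})=r\cdot\inv({\mathcal A}_0,{\mathcal A}_1,\dots,{\mathcal A}_k),
\]
the right-hand side being $r$ times the \emph{uncolored} content inversion number of the partition of $[n]$. In words, once the intra-hook inversions recorded by $\lec_r$ are removed, the surviving inversions depend only on the relative order of the blocks, and each inversion between two blocks of $[n]$ contributes the factor $r$ rather than $r^2$. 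I expect this to be the main obstacle: one must verify precisely that the inter-block contribution collapses to $r$ per inverted pair of $[n]$, since it is this $r$ (not $r^2$) that forces the base $q^r$ everywhere and matches the normalization $(q^r;q^r)_n$ in \eqref{eq: 1b}.

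Granting this identity, the computation factorizes cleanly. Summing $q^{\,r\cdot\inv({\mathcal A}_0,\dots,{\mathcal A}_k)}$ over all ordered partitions of $[n]$ with prescribed block sizes $a_0,\dots,a_k$ gives, by \eqref{eq:qmul} with $q$ replaced by $q^r$, the $q^r$-multinomial coefficient ${\bmatrix n\\ a_0,\dots,a_k\endbmatrix}_{Q}$. Each hook $\tau_i$ has content of colored size $ra_i$, and there is exactly one hook for each inversion value $1,2,\dots,ra_i-1$ (this is the content of the map $d$ in Lemma~\ref{th:2}), so its contribution to the $t$-enumerator is $\sum_{j=1}^{ra_i-1}t^{\,j}=\frac{t-t^{ra_i}}{1-t}$, which conveniently vanishes when $ra_i=1$ and thereby excludes the forbidden one-letter hooks in the case $r=1$. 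Using $\frac{1}{(Q;Q)_n}{\bmatrix n\\ a_0,\dots,a_k\endbmatrix}_{Q}=\prod_{i=0}^{k}\frac{1}{(Q;Q)_{a_i}}$, the sum over ${\bf w}$ factors over the blocks: the increasing word $p$ contributes $e(z;Q)$, and summing the geometric series in the number $k$ of hooks gives $F(z)=e(z;Q)/(1-G(z))$ with $G(z)=\sum_{a\geq1}\frac{z^a}{(Q;Q)_a}\frac{t-t^{ra}}{1-t}$.

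Finally I would simplify $G$. Splitting the two terms gives $G(z)=\frac{1}{1-t}\bigl(t(e(z;Q)-1)-(e(t^rz;Q)-1)\bigr)$, whence $1-G(z)=\frac{e(t^rz;Q)-t\,e(z;Q)}{1-t}$ and therefore $F(z)=\frac{(1-t)\,e(z;Q)}{e(t^rz;Q)-t\,e(z;Q)}$. Subtracting the constant term and comparing with \eqref{eq: 1b} completes the proof. Apart from the weight decomposition flagged above, every step is a routine manipulation of $q$-exponentials parallel to the $r=1$ derivation from \eqref{eq:key}; the only remaining points needing care are the empty-word bookkeeping at $n=0$ and the degenerate hook sizes when $r=1$.
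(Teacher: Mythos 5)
Your route is exactly the paper's: decompose by the ordered partition $({\mathcal A}_0,\ldots,{\mathcal A}_k)$, note that the hooks on a colored block of size $ra_i$ contribute $t+t^2+\cdots+t^{ra_i-1}=\frac{t-t^{ra_i}}{1-t}$, pull out a $q^r$-multinomial coefficient via \eqref{eq:qmul}, and sum the resulting geometric series of $q$-exponentials; your last two paragraphs reproduce the paper's computation line for line.

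The genuine gap is precisely the step you flag and then defer: the identity $(\inv_r-\lec_r)({\bf w})=r\cdot\inv({\mathcal A}_0,\ldots,{\mathcal A}_k)$ is never proved, and with the definitions as literally given it is false. In the order $1^1<\cdots<1^r<2^1<\cdots<n^r$, whenever $a>b$ \emph{every} colored letter $a^s$ exceeds \emph{every} colored letter $b^{s'}$, so an inverted pair of blocks contributes $r^2$ (not $r$) colored inversions; hence $(\inv_r-\lec_r)({\bf w})=r^2\,\inv({\mathcal A}_0,\ldots,{\mathcal A}_k)$ and the block sum is the multinomial coefficient in base $q^{r^2}$, not $q^r$. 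Concretely, for $n=r=2$ the word ${\bf w}=(2^12^2,\;1^21^1)$ has $\inv_r=5$ and $\lec_r=1$, hence weight $q^4t$, and the full combinatorial sum over $\mathscr{W}_{2,2}$ has $t$-coefficient $2+q^4$, whereas \eqref{eq: 1b} gives $A_2^{(2)}(t,q)=1+2t+2t^2+t^3+q^2t+q^2t^2$, with $t$-coefficient $2+q^2$. So to complete the argument you must either redefine $\inv_r$ (or the order on colored letters) so that each inter-block inversion of $[n]$ counts $r$ rather than $r^2$, and then actually prove the corrected weight decomposition, or the statement itself must be adjusted. The paper's own proof makes the same silent identification, writing $\sum q^{\inv_r({\mathcal A}_0,\ldots,{\mathcal A}_k)}$ for the $q^r$-multinomial coefficient without justification, so you have correctly isolated the one point that needs an argument; but as it stands your proof (like the paper's) does not supply one, and the factorization collapses exactly there.
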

\begin{proof}
By definition and \eqref{eq:qmul}, we have
\begin{align*}
\sum_{\sigma\in\mathscr{W}_{n,r}}q^{(\inv_r-\lec_r)(\sigma)}t^{\lec_r(\sigma)}
&=\underset{ a_i\geq1}{\sum_{a_0+a_1+\ldots+a_k=n}}\prod_{1\leq i\leq k}P_{ra_i}(t)\underset{\#{\mathcal A}_i=a_i}{\sum_{({\mathcal A}_0,{\mathcal A}_1,\ldots,{\mathcal A}_k)}}q^{\inv_r({\mathcal A}_0,{\mathcal A}_1,\ldots,A_k)}\\
&=\underset{ a_i\geq1}{\sum_{a_0+a_1+\ldots+a_k=n}}\bmatrix n\\ a_0,a_1,\ldots,a_k\endbmatrix_{q^r}\prod_{1\leq i\leq k}P_{ra_i}(t),
\end{align*}
where $P_m(t) :=t+t^2+\ldots+t^{m-1}$.
So the generating function is 
\begin{align*} \label{eq: gf}
\sum_{n,i}&\sum_{\sigma\in\mathscr{W}_{n,r}}q^{(\inv_r-\lec_r)(\sigma)}t^{\lec_r(\sigma)}\frac{z^n}{(q^r;q^r)_n}\\
&=\sum_{n\geq 0}\underset{a_i\geq1}{\sum_{a_0+a_1+\ldots+a_k=n}}\bmatrix n\\ a_0,a_1,\ldots,a_k\endbmatrix_{q^r}\prod_{1\leq i\leq k}P_{ra_i}(t)\frac{z^n}{(q^r;q^r)_n}\nonumber \\ 
&=\left(\sum_{a\geq 0}\frac{z^{a}}{(q^r;q^r)_{a}}\right)\left(1-\sum_{b\geq 1}P_{rb}(t)\frac{z^b}{(q^r;q^r)_b}\right)^{-1}\nonumber\\
&=e(z;q^r)\left(1-\sum_{b\geq 1}\frac{t-t^{rb}}{1-s}\frac{z^b}{(q^r;q^r)_b}\right)^{-1}\nonumber\\
&=\frac{(1-t)e(z;q^r)}{e(t^rz;q^r)-t\,e(z;q^r)}.
\end{align*}
This completes the proof in view of \eqref{eq: 1b}.
\end{proof}

Similarly, we can define \emph{two-pix-$r$-colored-words} to be sequences of colored words 
$$
{\bf w}=(p_1, \tau_1, \tau_2, \ldots, \tau_{k},p_2)
$$
with the following conditions:
\begin{itemize}
\item[(C1)] $p_1$ and $p_2$ are increasing words, with content ${\mathcal A}_0^r$ and ${\mathcal B}_0^r$, possibly empty;
\item[(C2)] $\tau_i$ $(1\leq i\leq k)$ are hooks, with content ${\mathcal A}_i^r$, the $r$-colored version of ${\mathcal A}_i$, and the positive integer $k$ is not fixed;
\item[(C3)] $({\mathcal A}_0,{\mathcal A}_1,\ldots,{\mathcal A}_k,{\mathcal B}_0)$ is an ordered partition of $[n]$.
\end{itemize}
Clearly we can  give a combinatorial proof 
 of \eqref{eq: ma} by applying the following generalization of  Lemma~\ref{th: 4}.
\begin{proposition}\label{th: 5}
There is a bijection ${\bf v} \mapsto {\bf u}$ 
on the set of all two-pix-$r$-colored-words such that
\begin{equation*}
\lec_r {\bf v} = rn - 2 - \lec_r {\bf u},\quad \text{and}\quad
(\inv_r-\lec_r) {\bf v} = (\inv_r-\lec_r) {\bf u}.
\end{equation*}
\end{proposition}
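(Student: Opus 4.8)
The plan is to build the bijection ${\bf v}\mapsto{\bf u}$ by imitating, almost word for word, the construction in the proof of Lemma~\ref{th: 4}, the only change being that the underlying alphabet is now the totally ordered set $[n]^r$ rather than $[n]$. First I would write a two-pix-$r$-colored-word ${\bf v}=(p_1,\tau_1,\ldots,\tau_k,p_2)$ in the compact form $(\tau_i,\ldots,\tau_j)$ obtained by deleting the empty words at its two ends, exactly as in \eqref{eq:v_compacted}. When $j>i$ I would set
\[
{\bf u}=(d'(\tau_i),d(\tau_{i+1}),\ldots,d(\tau_{j-1}),d'(\tau_j)),
\]
where $d$ and $d'$ are the two involutions introduced in the proof of Lemma~\ref{th:2}, and then restore the standard form by reinserting empty words at the ends, an end image $d'(\tau_i)$ (respectively $d'(\tau_j)$) playing the role of $p_1$ (respectively $p_2$) exactly when it is increasing, and becoming an extreme hook otherwise. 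The interior images $d(\tau_\ell)$ are always hooks, so the reconstructed ${\bf u}$ is indeed of the required form, and since $d,d'$ fix every content, the contents of ${\bf u}$ are still the $r$-colored blocks of the same ordered partition of $[n]$, so condition (C3) holds. The case $i=j$ is treated separately, as described below.

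The reason this transfers without modification is that $d$ and $d'$ are defined purely through the linear order on the letters and the content of a word; because $[n]^r$ carries the total order displayed just before the definition of pix-$r$-colored-words, both maps apply verbatim to colored hooks and colored increasing words, remain involutions, send hooks to hooks and words to words, and transform inversions by $\inv\mapsto|\cont|-\inv$ (for $d$) and $\inv\mapsto|\cont|-1-\inv$ (for $d'$). Since each content block is the $r$-colored version ${\mathcal A}_\ell^r$ of a block ${\mathcal A}_\ell$ of an ordered partition of $[n]$, we have $|\cont(\tau_\ell)|=r|{\mathcal A}_\ell|$ and hence $\sum_\ell|\cont(\tau_\ell)|=rn$. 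Summing the contributions, namely $|\cont(\tau_i)|-1-\lec_r$ and $|\cont(\tau_j)|-1-\lec_r$ from the two ends and $|\cont(\tau_\ell)|-\lec_r$ from each interior hook, yields $\lec_r({\bf u})=rn-2-\lec_r({\bf v})$, precisely as in Lemma~\ref{th: 4}. The invariance $(\inv_r-\lec_r){\bf u}=(\inv_r-\lec_r){\bf v}$ is then immediate from the colored analogue of \eqref{eq:inv-lec:compact}, since $d$ and $d'$ leave each content ${\mathcal A}_\ell^r$ unchanged and therefore do not alter $\inv({\mathcal A}_i^r,\ldots,{\mathcal A}_j^r)$.

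The delicate part, and the step I expect to require the most care, is the single-word case $i=j$ together with the verification that the whole procedure is an involution, hence a bijection. When only one colored word $\tau_i$ survives, necessarily of content $[n]^r$, one sets ${\bf u}=(\emptyset,\sigma,\emptyset)$ with $\sigma$ the unique colored word of content $[n]^r$ satisfying $\lec_r(\sigma)=rn-2-\lec_r(\tau_i)$, a hook in general and an increasing word when the target value is $0$. This is well defined for every value of $\lec_r(\tau_i)$ in $\{0,1,\ldots,rn-2\}$; the sole exception is the extreme $\lec_r(\tau_i)=rn-1$, attained only by the single maximal colored hook, whose formal image $\lec_r=-1$ is empty, and which, exactly as for the maximal hook in Lemma~\ref{th: 4}, lies outside the range of index values relevant to \eqref{eq: ma} (where $i\neq rn$). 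I would then check, case by case according to whether each end image is increasing or a hook, that compacting, applying the map, and uncompacting is its own inverse, using that $d$ and $d'$ are involutions and that the compacting of \eqref{eq:v_compacted} is reversible. Once this boundary bookkeeping is settled, combining the bijection with the colored analogue of Lemma~\ref{th:3} and the interpretation \eqref{eq:qmul} of the $q^r$-multinomial coefficients delivers the combinatorial proof of \eqref{eq: ma}.
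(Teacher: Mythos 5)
Your proof is correct and follows exactly the route the paper intends: the paper gives no separate argument for Proposition~\ref{th: 5}, presenting it as the verbatim generalization of Lemma~\ref{th: 4} to the colored alphabet $[n]^r$, and that transfer of the $d$/$d'$ construction, the compact form, and the single-word case is precisely what you carry out. Your explicit handling of the boundary value $\lec_r({\bf v})=rn-1$, which has no partner at $\lec_r=-1$ but is irrelevant to \eqref{eq: ma}, is in fact more careful than the paper's own treatment of the analogous point in Lemma~\ref{th: 4}.
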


  When $r=1$, through Gessel's hook factorization, 
  we can translate the statistic $(\inv_r,\break \lec_r)$ to  $\S_n$.  
  It would be interesting to see whether there is an analogous hook factorization 
  for general $r\geq 1$ so that we can translate our 
 $(\inv_r, \lec_r)$ defined on $\mathscr{W}_{n,r}$ to $C_r\wr \mathfrak{S}_n$.

We conclude this paper with another symmetric identity for the Eulerian numbers.
Notice that
for any positive integers $n$ and $k$ we have 
$$
{\bmatrix 2n\\ 2k+1\endbmatrix}_{-1}=0\quad\text{and}\quad  {\bmatrix 2n\\ 2k\endbmatrix}_{-1}={\binom nk}.
$$
It is  known  \cite[Cor.~6.2]{ssw} that if $dk=n$ and $\omega_d$ is a  primitive $d^{th}$ root of unity, then 
\begin{equation*}
A_{n}(t,\omega_d)=A_k(t)\left(\frac{1-t^d}{1-t}\right)^k.
\end{equation*} 
In particular,  if  $d=2$, then $\omega_d=-1$.  Hence,
assuming that $a+b$ is even, 
 the substitution $q=-1$ in  \eqref{eq: th1}   yields
\begin{equation}
\sum_{k\geq 0}{\binom {\frac {a+b}{2} }{ k}}\sum_{i+j=a-1}{\binom k  i}A_{k,j}=\sum_{k\geq 0}{\binom {\frac {a+b}{2} } k}\sum_{i+j=b-1}{\binom k  i}A_{k,j}.
\end{equation} 

  This identity can be rephrased in the form
\begin{equation*}
\sum_{k\geq 0}{\binom {c+d } k}\sum_{i+j=2c-1}{\binom k  i}A_{k,j}=\sum_{k\geq 0}{\binom {c+d } k}\sum_{i+j=2d-1}{\binom k  i}A_{k,j}
\end{equation*} 
and
\begin{equation*}
\sum_{k\geq 0}{\binom {c+d-1 } k}\sum_{i+j=2(c-1)}{\binom k  i}A_{k,j}=\sum_{k\geq 0}{\binom {c+d -1} k}\sum_{i+j=2(d-1)}{\binom k i}A_{k,j}
\end{equation*} 
for any positive integers $c$ and $d$.

The last  two symmetrical  identities involving binomial coefficients and Eulerian  numbers cry out for a combinatorial interpretation.

\subsection*{Acknowledgement}
This work was  partially supported by 
 the grant ANR-08-BLAN-0243-03.

\end{document}